\newtheorem{theorem}{Theorem}[section]
\newtheorem{proposition}[theorem]{Proposition}
\newtheorem{corollary}[theorem]{Corollary} 
\newtheorem{definition}[theorem]{Definition}
\date{}
\title[Spectral Barron spaces arising from quantum harmonic analysis]{Spectral Barron spaces arising from quantum harmonic analysis}
\author{Yaogan Mensah}
\address{Department of Mathematics, University of Lom\'e, Togo \\P.O.Box 1515 Lom\'e 1, Togo} 
\email{\textcolor[rgb]{0.00,0.00,0.84}{mensahyaogan2@gmail.com,  ymensah@univ-lome.tg}}
\begin{document}

\begin{abstract}
In this paper, spectral Barron spaces are defined in the framework of quantum harmonic analysis. Their fundamental properties are studied. These include, among others, their   completeness structure and some continuous embedding results. As an application, the existence and the uniqueness of the solution of a Schr\"odinger-type equation are proved.  
\end{abstract}
\maketitle

{\small Keywords: quantum harmonic analysis, spectral Barron space,  Schr\"odinger equation.}
\newline
{\small 2020 Mathematics Subject Classification: 47B90, 43A25, 47D08.}

\section{Introduction}
 Barron \cite{Barron},  showed that feedforward networks with one layer of sigmoidal nonlinearities achieve integrated squared error of order $O(n^{-1})$, where $n$ is the number of nodes. The approximated function is assumed to have a bound on the first moment of the magnitude distribution of the Fourier transform. From then, a class of functions known as spectral Barron space emerged 
 as a meeting point of research fields such as machine learning,  functional analysis, partial differential equations, etc (see \cite{Chen, Choulli,Meng} and references therein).
 
 The condition that characterizes the functions of the classical spectral Barron spaces is related to the Fourier transformation. This justifies the fact that we can study spectral Barron spaces in any framework where such a transformation exists with, of course, the necessary properties. In \cite{Fulsche}, Fulsche and Galke extended the notions of quantum harmonic analysis to abelian phase spaces (locally compact abelian groups endowed with a Heisenberg multiplier) following  the work of Werner \cite{Werner}.  They defined the  quantum Fourier transform of trace-class operators and studied  its major  properties. Thus, a framework  is created for the construction and the  analysis of spectral Barron spaces consisting of bounded operators on a Hilbert space. This framework lies at the intersection of harmonic analysis and operator theory  (quantum harmonic analysis). 
 
 The aim of this article is to study spectral  Barron spaces in the light of quantum harmonic analysis by following the steps of the article \cite{Choulli} in which  spectral Barron spaces on $\mathbb{R}^d$ are investigated. We defined spectral Barron spaces whose elements are bounded linear operators on a Hilbert space. These spaces are equipped with a Banach space structure. Some isomorphims and continuous embeddings between the  spectral Barron spaces on one hand and  the  spectral Barron spaces and the corresponding Sobolev spaces on the other hand  are obtained. Some of the results are invested in solving a Schr\"odinger-type equation.  The particularity in our case lies in the fact that the data of the problem, especially the potential,  are operators of a spectral Barron space.
 
 The rest of the article is organised as follows.  Section \ref{Preliminaries} collects facts about  compact and Schatten $p$-classes operators, Pontrajagin dual of a locally compact group,  quantum Fourier transform and  Banach contraction principle.  In Section \ref{Spectral Barron spaces},    spectral Barron spaces related to the quantumFourier transform are defined and their properties are explored. In Section \ref{Schrodinger}, the theory is applied to explored the existence and the  uniqueness  of the solution of a Schr\"odinger-type equation.

\section{Preliminaries}\label{Preliminaries}

\subsection{Compact and Schatten $p$-classes operators}
Here, we recall information related to compact operators and Schatten class operators. We limit ourselves to the facts we need, and for other details, we refer \cite{Dell, Zhu}.
 In this subsection and throughout the rest of the article, we denote by $H$  a separable complex Hilbert space. Let $B_1$  denote the closed unit ball of $H$, that is $B_1=\{x\in H : \|x\|_H\leq 1\}$. A linear operator $T$ on $H$ is said to be compact if the image  of $B_1$ by $T$ has compact closure. We denote by $\mathcal{K}(H)$ the set of compact operators on $H$. 
It is proved that $\mathcal{K}(H)$ is a two-sided ideal in $\mathcal{B}(H)$, the space of all bounded linear  operators on $H$, equipped with the uniform norm
\begin{equation}
\|T\|_{\mbox{op}}=\sup\left\{\|Tx\|_H : \|x\|_H\leq 1 \right\}.
\end{equation} 
The set $\mathcal{K}(H)$ inherits the topology of $\mathcal{B}(H)$.

If $T$ is a compact operator, then we can find two orthonormal  sets  $\{e_n\}$  and  $\{\varepsilon_n\}$ in $H$ such that for $x\in H$, 
\begin{equation}
Tx=\sum_{n=1}^\infty\lambda_n \langle x, e_n\rangle \varepsilon_n
\end{equation}
where  $\{\lambda_1, \lambda_2,\cdots\}$ is  the set of  singular values of $T$ nonincreasing ordered. 

For $1\leq p<\infty$, the Schatten $p$-class $S^p(H)$ is defined to be the space of compact operators on $H$ such that $\sum_{n=1}^\infty |\lambda_n|^p<\infty$. The  Schatten $p$-norm on $S^p(H)$ is given by :
\begin{equation}
\|T\|_{S^p(H)}=\left(\sum_{n=1}^\infty |\lambda_n|^p\right)^{\frac{1}{p}}. 
\end{equation}
and $S^p(H)$ is a Banach space under this norm. 
Moreover, if $p\leq q$, then $S^p(H)\subset S^q(H)$ and $\|T\|_{\mbox{op}}\leq \|T\|_{S^p(H)}\leq \|T\|_{S^q(H)}$. 
The two famous Schatten classes are the trace-class $S^1(H)$ and the Hilbert-Schmidt class $S^2(H)$.  Finally let us mention that :
\begin{enumerate}
\item[i)] $S^1(H)$ is dense in $S^2(H)$ in the $\|\cdot\|_{S^2(H)}$-topology,
\item[ii)] $S^2(H)$ is dense in $\mathcal{K}(H)$ in the uniform operator topology,
\item[iii)] $\mathcal{K}(H)$ is dense in $\mathcal{B}(H)$ in the strong operator topology.
\end{enumerate}

\subsection{Pontrjagin dual of a locally compact abelian group}
For more details about this subsection, we refer \cite{Hewitt, Rudin}.  

Let $G$ be a locally compact abelian  and Hausdorff group (we will write LCA group for the sake of brevity) with addition as multiplication.
Let  $\mathbb{T}$ denote the multiplicative group of complex numbers of modulus 1. 

A character of $G$ is any group homomorphism $\xi : G\rightarrow \mathbb{T}$. In other words for all $x,y\in G$, 
\begin{enumerate}
\item[i)] $|\xi(x)|=1$,
\item[ii)] $\xi(x+y)=\xi(x)\xi(y)$.
\end{enumerate}
 The set of all continuous characters of $G$ form an abelian group denoted by $\widehat{G}$ under the following law : 
 $$(\xi_1\xi_2)(x)=\xi_1(x)\xi_2(x), \xi_1, \xi_2\in \widehat{G}, x\in G.$$ 
 The inverse of $\xi \in \widehat{G}$ is given by 
 $$\xi^{-1}(x)=\overline{\xi (x)},\, x\in G.$$
The group  $\widehat{G}$ is topologized as follows.  
Let $K$ be a compact subset of $G$.  
 For $r>0$, let $U_r=\{ z\in \mathbb{C} : |1-z|<r\}$ and define $N(K,r)$ by 
 $$N(K,r)=\{\xi\in \widehat{G} : \xi (x)\in U_r, \forall x\in K\}.$$
The family of all sets $N(K,r)$ and their translates form a base of a topology with respect to  which  $\widehat{G}$  is a LCA  group \cite[Section 1.2.6]{Rudin}. 
Classical examples of dual groups are the following:
$$\widehat{\mathbb{T}}=\mathbb{Z},\,\widehat{\mathbb{R}}=\mathbb{R} \mbox{ and }\widehat{\mathbb{Z}}=\mathbb{T}.$$
Pontryagin duality theorem states that there exists a canonical isomorphism from $G$ onto $\widehat{\widehat{G}}$ (see \cite{Hewitt}). 

\subsection{Quantum Fourier transform}
In this subsection, let us briefly describe the quantum Fourier transform as formulated by Fulsche and Galke \cite{Fulsche}. 

Let $G$ be a LCA group with neutral element denoted by 0. A projective unitary representation of $G$ on the Hilbert space $H$ is a map $ \rho$ from $G$ into   $\mathcal{U}(H)$, the group of unitary operators of $H$, such that there exists  a function $m: G\times G \rightarrow \mathbb{T}$ satisfying 
\begin{equation}
\rho(x)\rho(y)=m(x,y)\rho(x+y), \forall x,y\in G.
\end{equation}
 The function $m$ is called the multiplier  of the projective representation $\rho$. For $x\in G$, we write $U_x$ in place of $\rho (x)$. It is assumed that $U_0=I$, the identity operator in $\mathcal{U}(H)$. Also, the function $m$ is assumed to be separately  continuous. Let us mention that the adjoint of  $U_x$ is given by 
 $U_x^*=\overline{m(x,-x)}U_{-x}$.
 
 Put $\sigma(x,y)=\displaystyle\frac{m(x,y)}{m(y,x)}, x,y\in G$. From the continuous property of $m$, it holds that the mapping $x\mapsto \sigma(x, \cdot)$ from $G$ into $\widehat{G}$ is a continuous homomorphism. The function $m$ is called a Heisenberg multiplier if the above mapping is a topological isomorphism. In this case, the group $G$ and its dual $\widehat{G}$ can be identified with each other. 
 
 The assumptions about the projective representation  behind the construction of the quantum Fourier transform   are as follows:
 \begin{enumerate}
 \item[i)] $\sigma$ is a Heisenberg multiplier,
 \item[ii)] $\rho$ is square integrable; that is, there exists nonzero vectors  $\varphi,\eta\in H$ such that the mapping $x\mapsto \langle U_x \varphi,\eta\rangle$ belongs to $L^2(G)$, 
 \item[iii)] $m$ satisfies : $m(x,y)=m(-x,-y), \forall x,y\in G$. 
 \end{enumerate}
  For some results, it is also assumed that $\rho$ is integrable, that is, there exists a nonzero vector $\varphi\in H$ such that  the mapping $x\mapsto \langle U_x \varphi,\varphi\rangle$ belongs to $L^1(G)$.

The quantum Fourier transform of the operator $T\in S^1(H)$ is defined as \cite{Fulsche} :
\begin{equation}
\mathcal{F}_U(T)(\xi)=\mbox{tr}(TU_\xi^*), \xi\in \widehat{G}, 
\end{equation}
where tr is the usual trace of a trace-class  operator and $U_\xi^*$ is the adjoint of the unitary  operator $U_\xi$. 

Moreover, if $T\in S^1(H)$ is such that $\mathcal{F}_U(T)\in L^1(\widehat{G})$, then the following inversion formula holds :
\begin{equation}
T=\int_{\widehat{G}}\mathcal{F}_U(T)(\xi)U_\xi d\xi.
\end{equation}

Now, we will state some results of \cite{Fulsche} that we have used in this article.
\begin{proposition}\cite[Proposition 6.26]{Fulsche}\label{Fulsche Proposition 6.26}
Assume the representation
is integrable. If $f\in L^1(\widehat{G})$, then $\mathcal{F}_U^{-1}(f)\in \mathcal{K}(H)$ and $\|\mathcal{F}_U^{-1}(f)\|_{\mbox{op}}\leq \|f\|_{L^1(\widehat{G})}.$
\end{proposition}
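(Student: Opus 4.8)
The plan is to prove both assertions together by first settling the Hilbert--Schmidt case and then invoking a density argument, using that $\mathcal{K}(H)$ is closed in $\bigl(\mathcal{B}(H),\|\cdot\|_{\mbox{op}}\bigr)$.

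First I would record the norm estimate for an arbitrary $f\in L^1(\widehat G)$. By definition $\mathcal{F}_U^{-1}(f)=\int_{\widehat G} f(\xi)\,U_\xi\,d\xi$, which is most safely read through its matrix coefficients: for $u,v\in H$, $\langle \mathcal{F}_U^{-1}(f)u,v\rangle=\int_{\widehat G} f(\xi)\,\langle U_\xi u,v\rangle\,d\xi$. Since each $U_\xi$ is unitary, $|\langle U_\xi u,v\rangle|\le \|u\|_H\,\|v\|_H$, so $|\langle \mathcal{F}_U^{-1}(f)u,v\rangle|\le \|f\|_{L^1(\widehat G)}\,\|u\|_H\,\|v\|_H$; taking the supremum over unit vectors $u,v$ gives $\|\mathcal{F}_U^{-1}(f)\|_{\mbox{op}}\le \|f\|_{L^1(\widehat G)}$. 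This step is routine.

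Next I would handle the case $f\in L^1(\widehat G)\cap L^2(\widehat G)$. Here the standing hypotheses on $\rho$ --- in particular its square integrability, which underlies the quantum Plancherel theorem of \cite{Fulsche} --- are the essential ingredient: that theorem identifies $\mathcal{F}_U$, up to a normalising constant, with a unitary between $S^2(H)$ and $L^2(\widehat G)$, so that $\mathcal{F}_U^{-1}$ sends $L^2(\widehat G)$ into $S^2(H)$. One then has to check that this $L^2$-inverse coincides on $L^1(\widehat G)\cap L^2(\widehat G)$ with the integral $\int_{\widehat G} f(\xi)\,U_\xi\,d\xi$ appearing above --- a consistency verification that I expect to be the main technical obstacle, to be carried out by testing both operators against matrix coefficients and using the integrability of $\rho$ to justify the manipulations. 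Granting this, $\mathcal{F}_U^{-1}(f)\in S^2(H)\subset\mathcal{K}(H)$ for every $f\in L^1(\widehat G)\cap L^2(\widehat G)$.

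Finally I would pass to a general $f\in L^1(\widehat G)$ by approximation. The subspace $L^1(\widehat G)\cap L^2(\widehat G)$ is dense in $L^1(\widehat G)$: setting $f_n=f\,\mathbf{1}_{\{1/n\le |f|\le n\}}$ one has $|f_n|^2\le n|f|$, hence $f_n\in L^1(\widehat G)\cap L^2(\widehat G)$, while $f_n\to f$ in $L^1(\widehat G)$ by dominated convergence. By the first step, $\|\mathcal{F}_U^{-1}(f_n)-\mathcal{F}_U^{-1}(f)\|_{\mbox{op}}\le \|f_n-f\|_{L^1(\widehat G)}\to 0$; each $\mathcal{F}_U^{-1}(f_n)$ is compact by the previous step, and $\mathcal{K}(H)$ is norm-closed in $\mathcal{B}(H)$, so $\mathcal{F}_U^{-1}(f)=\lim_n\mathcal{F}_U^{-1}(f_n)\in\mathcal{K}(H)$. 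Combined with the estimate of the first step, this yields the proposition.
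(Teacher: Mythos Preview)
The paper does not actually prove this proposition: it is stated there as a quotation of \cite[Proposition 6.26]{Fulsche} and is used as a black box, with no accompanying proof in the present article. Consequently there is no ``paper's own proof'' to compare against.

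That said, your proposed argument is the natural one and is essentially correct. The norm bound follows immediately from the unitarity of each $U_\xi$, as you wrote. For compactness, reducing first to $f\in L^1(\widehat G)\cap L^2(\widehat G)$ via the quantum Plancherel theorem (so that $\mathcal{F}_U^{-1}(f)\in S^2(H)\subset\mathcal{K}(H)$), and then passing to general $f\in L^1(\widehat G)$ by $L^1$-density together with the norm estimate and the fact that $\mathcal{K}(H)$ is closed in $\mathcal{B}(H)$, is exactly the standard Riemann--Lebesgue-type strategy. The only genuine care point is the one you already flagged: the consistency of the $L^2$-Plancherel inverse with the weak (Pettis) integral $\int_{\widehat G} f(\xi)\,U_\xi\,d\xi$ on the overlap $L^1\cap L^2$. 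This is handled by testing both against rank-one operators (equivalently, against matrix coefficients $\langle U_\xi u,v\rangle$) and using the integrability assumption on $\rho$ to justify the interchange of limits; once that is recorded, the rest of your outline goes through without difficulty.
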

Let us recall that in this framework, the twisted convolution product of $f,g\in L^1(\widehat{G})$ is defined as
\begin{equation}
(f\ast_m g)(\xi)=\int_{\widehat{G}}f(\xi-\eta)g(\eta)m(\xi-\eta,\eta)d\eta.
\end{equation}
\begin{proposition}\cite[Corollary 6.24]{Fulsche}\label{Fulsche Corollary 6.24}
If $S,T\in S^1(H)$, then 
$$\mathcal{F}_U(ST)=\mathcal{F}_U(S) \ast_m\mathcal{F}_U(T).$$
\end{proposition}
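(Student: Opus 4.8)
Since this statement is quoted from \cite{Fulsche}, I only sketch how a direct proof would run. The guiding idea is to use the inversion formula to resolve one of the factors into a superposition of the unitaries $U_\eta$, carry out the multiplication inside the integral with the help of the multiplier law, and then recognise the twisted convolution. Concretely, assuming for the moment that $\mathcal{F}_U(S)\in L^1(\widehat G)$ so that $S=\int_{\widehat G}\mathcal{F}_U(S)(\eta)\,U_\eta\,d\eta$ holds as a Bochner integral in $S^1(H)$, one obtains
\[
\mathcal{F}_U(ST)(\xi)=\operatorname{tr}\bigl(ST\,U_\xi^{*}\bigr)=\int_{\widehat G}\mathcal{F}_U(S)(\eta)\,\operatorname{tr}\bigl(U_\eta\,T\,U_\xi^{*}\bigr)\,d\eta ,
\]
the interchange of the continuous trace functional with the Bochner integral being legitimate. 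Using cyclicity of the trace, the identity $U_\xi^{*}=\overline{m(\xi,-\xi)}\,U_{-\xi}$, the multiplier law $U_{-\xi}U_\eta=m(-\xi,\eta)\,U_{\eta-\xi}$, and $U_{\eta-\xi}=m(\xi-\eta,\eta-\xi)\,U_{\xi-\eta}^{*}$, the inner trace turns into a unimodular scalar times $\mathcal{F}_U(T)(\xi-\eta)$; the three multipliers that appear collapse to $m(\eta,\xi-\eta)$ once one invokes the cocycle identity $m(a,b)m(a+b,c)=m(b,c)m(a,b+c)$ (which is just associativity of the $U$'s) together with the standing symmetry assumption $m(x,y)=m(-x,-y)$. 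After the substitution $\eta\mapsto\xi-\eta$ this is precisely $\bigl(\mathcal{F}_U(S)\ast_m\mathcal{F}_U(T)\bigr)(\xi)$.

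To turn this into an honest proof one must discard the auxiliary hypothesis $\mathcal{F}_U(S)\in L^1(\widehat G)$, which is not available for a general trace-class operator. The remedy is a density argument. First I would verify the identity on the dense subclass of finite-rank operators: for rank-one $S=\varphi\otimes\psi$ and $T=\alpha\otimes\beta$ one has $ST=\langle\alpha,\psi\rangle\,\varphi\otimes\beta$, and $\mathcal{F}_U$ of a rank-one operator is a matrix coefficient of $\rho$, so the claimed equality reduces to the twisted-convolution form of the orthogonality relations carried by the square-integrable representation $\rho$. Then I would pass to arbitrary $S,T\in S^1(H)$ by continuity, using that $(S,T)\mapsto ST$ is jointly continuous on $S^1(H)$, that $\|\mathcal{F}_U(ST)\|_{\infty}\le\|S\|_{S^1(H)}\|T\|_{S^1(H)}$, that $S^1(H)\hookrightarrow S^2(H)$ continuously so that $\|\mathcal{F}_U(R)\|_{L^2(\widehat G)}$ is controlled by $\|R\|_{S^1(H)}$ (Plancherel), and that $\|f\ast_m g\|_{\infty}\le\|f\|_{L^2(\widehat G)}\|g\|_{L^2(\widehat G)}$ by Cauchy--Schwarz; thus both sides are continuous bilinear maps $S^1(H)\times S^1(H)\to C_0(\widehat G)$ agreeing on a dense set.

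The multiplier bookkeeping in the first step is the part most prone to sign and conjugation errors, but it is entirely routine once the cocycle and symmetry relations are on the table; the part that genuinely requires care is the density/continuity step, since the clean one-line argument via the inversion formula is only available under an integrability hypothesis that fails for some $S\in S^1(H)$. One could equally base the entire proof on the finite-rank case together with continuity, treating the inversion-formula computation merely as the explanation for the appearance of the twist $m(\cdot,\cdot)$ in $\ast_m$.
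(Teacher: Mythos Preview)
The paper does not prove this proposition at all: it is quoted verbatim from \cite[Corollary~6.24]{Fulsche} and used as a black box, so there is no ``paper's own proof'' to compare against. You correctly recognise this at the outset.

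Your sketch is a reasonable outline of how a proof in the spirit of \cite{Fulsche} would proceed, and the two-step strategy (compute formally under an integrability hypothesis via the inversion formula, then close by density and continuity) is standard and sound. The continuity bounds you cite for the density step are correct: $\|\mathcal{F}_U(ST)\|_\infty\le\|ST\|_{S^1}\le\|S\|_{S^1}\|T\|_{S^1}$ on the left, and on the right the chain $S^1(H)\hookrightarrow S^2(H)$, Plancherel, and $\|f\ast_m g\|_\infty\le\|f\|_{L^2}\|g\|_{L^2}$ does the job. The only place where your write-up is genuinely sketchy is the rank-one base case: saying it ``reduces to the twisted-convolution form of the orthogonality relations'' is true in spirit but would need the explicit Moyal/orthogonality identity for matrix coefficients of $\rho$ to be written down; that is where the square-integrability hypothesis on $\rho$ actually enters. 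If you intend this as more than an indication of the source, that step should be fleshed out.
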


\subsection{Banach contraction principle}
To end these preliminary notes, we recall a well-known result from the literature : Banach contraction principle. It is  almost indispensable in establishing the existence of a fixed point of a mapping. 

\begin{definition}
Let $(X,d)$ be a metric space. Let $f: X\rightarrow X$ be a map. 
\begin{enumerate}
\item A point $x$ in $X$ is called a fixed point of $f$ if $x=f(x)$.
\item The mapping $f$ is said to be  contraction if there exists $h<1$ such that $\forall x,y\in X$,
$$d(f(x),f(y))\leq hd(x,y).$$ 
\end{enumerate}
\end{definition}
\begin{theorem}(see \cite[Theorem 2.1]{Latif})\label{BCP}
Let $(X,d)$ be a complete metric
space. Then each contraction map $f : X \rightarrow  X$ has a unique fixed point.
\end{theorem}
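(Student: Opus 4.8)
The plan is to run the classical Picard iteration. First I would fix an arbitrary $x_0 \in X$ and define recursively $x_{n+1} = f(x_n)$ for $n \ge 0$; this is legitimate because $f$ maps $X$ into itself. A straightforward induction using the contraction inequality gives $d(x_{n+1},x_n) \le h^{n}\, d(x_1,x_0)$ for every $n$.

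Next I would show that $(x_n)$ is Cauchy. For $m > n$, the triangle inequality along $x_n, x_{n+1}, \dots, x_m$ together with the previous bound and the geometric series yields
$$d(x_m,x_n) \le \sum_{k=n}^{m-1} d(x_{k+1},x_k) \le \Big(\sum_{k=n}^{m-1} h^{k}\Big)\, d(x_1,x_0) \le \frac{h^{n}}{1-h}\, d(x_1,x_0).$$
Since $0 \le h < 1$, the right-hand side tends to $0$ as $n \to \infty$, so $(x_n)$ is Cauchy; by completeness of $(X,d)$ it converges to some $x^* \in X$. Because a contraction is Lipschitz, hence continuous, letting $n \to \infty$ in $x_{n+1} = f(x_n)$ gives $x^* = f(x^*)$, so $x^*$ is a fixed point.

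For uniqueness, suppose $x^*$ and $y^*$ are both fixed points. Then $d(x^*,y^*) = d(f(x^*),f(y^*)) \le h\, d(x^*,y^*)$, so $(1-h)\,d(x^*,y^*) \le 0$; since $1-h > 0$, this forces $d(x^*,y^*) = 0$, i.e. $x^* = y^*$. The whole argument is elementary; the only step requiring a little care is the Cauchy estimate, where one must sum the geometric tail rather than merely control consecutive differences — everything else is immediate from the hypotheses.
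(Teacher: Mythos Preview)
Your argument is the standard Picard iteration proof of the Banach contraction principle, and it is correct in every detail: the inductive bound $d(x_{n+1},x_n)\le h^n d(x_1,x_0)$, the geometric-tail estimate giving the Cauchy property, the passage to the limit via continuity of $f$, and the uniqueness step are all sound.

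There is nothing to compare against, however, because the paper does \emph{not} supply its own proof of this theorem. It is stated in the preliminaries with an explicit citation (``see \cite[Theorem 2.1]{Latif}'') and is used only as a black box in Section~\ref{Schrodinger} to obtain the fixed point of the map $\mathfrak{E}$. So your proposal is a correct self-contained proof of a result that the paper merely quotes from the literature.
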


\section{Spectral Barron spaces of compact operators: Properties and embeddings}\label{Spectral Barron spaces}

In this section, we define the spectral Barron spaces whose elements are bounded linear operators, thus bringing these spaces into the realm of operator theory for the first time, with the hope that they can be useful in quantum physics. 

Let $H$ be a complex separable Hilbert space, $\mathcal{B}(H)$ the space of bounded linear operators on $H$, $\widehat{G}$ the dual group of a localement compact group $G$. 
 Let $L^1(\widehat{G})$ denote the Lebesgue space of  functions which are integrable with respect to the Haar measure on $\widehat{G}$. 

Let $\gamma :\widehat{G}\rightarrow (0,\infty)$ be a measurable mapping and let $s$ be a nonnegative real number.  Hereafter is the main definition of this article.

\begin{definition}  We call spectral Barron space the set 
\begin{equation}
\mathfrak{B}^s_\gamma (H)=\left\{T\in \mathcal{B}(H): (1+\gamma(\xi)^2)^{\frac{s}{2}}\mathcal{F}_U(T)\in L^1(\widehat{G})\right\}
\end{equation}
 equipped with the  norm defined  by 

\begin{equation}
\|T\|_{\mathfrak{B}^s_\gamma (H)}=\left\|(1+\gamma(\xi)^2)^{\frac{s}{2}}\mathcal{F}_U(T)\right\|_{L^1(\widehat{G})}=\int_{\widehat{G}}(1+\gamma(\xi)^2)^{\frac{s}{2}}|\mathcal{F}_U(T)(\xi)|d\xi.
\end{equation}
\end{definition}
It can easily be observed that spectral Barron spaces are (complex) vector spaces.
The following result states the completeness of the spectral Barron space $\mathfrak{B}^0_\gamma(H)$.
\begin{theorem}\label{Banach space}
The spectral Barron space $\mathfrak{B}^0_\gamma(H)$ is a Banach space.  
\end{theorem}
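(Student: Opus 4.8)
The plan is to transport the Banach-space structure of $L^1(\widehat{G})$ to $\mathfrak{B}^0_\gamma(H)$ through the quantum Fourier transform, exploiting that for $s=0$ the norm reduces to $\|T\|_{\mathfrak{B}^0_\gamma(H)}=\|\mathcal{F}_U(T)\|_{L^1(\widehat{G})}$, so that $\mathcal{F}_U$ becomes an isometry from $\mathfrak{B}^0_\gamma(H)$ into $L^1(\widehat{G})$.

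First I would check that $\|\cdot\|_{\mathfrak{B}^0_\gamma(H)}$ is indeed a norm. Positive homogeneity and the triangle inequality are inherited immediately from the linearity of $\mathcal{F}_U$ together with the corresponding properties of $\|\cdot\|_{L^1(\widehat{G})}$. For definiteness, if $\|T\|_{\mathfrak{B}^0_\gamma(H)}=0$ then $\mathcal{F}_U(T)=0$ as an element of $L^1(\widehat{G})$, and the inversion formula $T=\int_{\widehat{G}}\mathcal{F}_U(T)(\xi)U_\xi\,d\xi$ forces $T=0$. Hence $\mathcal{F}_U\colon\mathfrak{B}^0_\gamma(H)\to L^1(\widehat{G})$ is a linear isometric injection, and it suffices to prove completeness.

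For completeness I would take a Cauchy sequence $(T_n)$ in $\mathfrak{B}^0_\gamma(H)$; then $(\mathcal{F}_U(T_n))$ is Cauchy in the Banach space $L^1(\widehat{G})$ and therefore converges to some $f\in L^1(\widehat{G})$. I would then set $T:=\mathcal{F}_U^{-1}(f)=\int_{\widehat{G}}f(\xi)U_\xi\,d\xi$. By Proposition \ref{Fulsche Proposition 6.26}, $T\in\mathcal{K}(H)\subseteq\mathcal{B}(H)$, and since each $T_n$ is recovered from $\mathcal{F}_U(T_n)\in L^1(\widehat{G})$ via the inversion formula, the same proposition gives $\|T-T_n\|_{\mathrm{op}}\le\|f-\mathcal{F}_U(T_n)\|_{L^1(\widehat{G})}\to 0$. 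It then remains to confirm that $\mathcal{F}_U(T)=f$; granting this, $T\in\mathfrak{B}^0_\gamma(H)$ and $\|T-T_n\|_{\mathfrak{B}^0_\gamma(H)}=\|f-\mathcal{F}_U(T_n)\|_{L^1(\widehat{G})}\to 0$, so the space is complete. Equivalently, once $\mathcal{F}_U\circ\mathcal{F}_U^{-1}=\mathrm{id}$ on $L^1(\widehat{G})$ is known, $\mathcal{F}_U$ is an isometric isomorphism of $\mathfrak{B}^0_\gamma(H)$ onto $L^1(\widehat{G})$ and the conclusion is immediate.

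I expect the main obstacle to be precisely this last identification $\mathcal{F}_U(\mathcal{F}_U^{-1}(f))=f$: the operator $T$ produced above is a priori only compact, not trace-class, so one must either invoke the extended Fourier inversion/Plancherel theory of \cite{Fulsche} (which already defines $\mathcal{F}_U$ on operators beyond $S^1(H)$, consistently with the definition of $\mathfrak{B}^0_\gamma(H)$ as a subset of $\mathcal{B}(H)$), or argue that, since $\mathcal{F}_U(T_n)\to f$ in $L^1(\widehat{G})$ while $T_n\to T$ in operator norm, continuity of $\mathcal{F}_U$ on the relevant operator space forces $\mathcal{F}_U(T)=f$. Once this point is settled, the remainder of the argument is a routine transfer of structure along an isometry.
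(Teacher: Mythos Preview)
Your approach is essentially identical to the paper's: push a Cauchy sequence through $\mathcal{F}_U$ into $L^1(\widehat{G})$, take the $L^1$-limit $\psi$, and pull it back via $\mathcal{F}_U^{-1}$. The identity $\mathcal{F}_U(\mathcal{F}_U^{-1}(\psi))=\psi$ that you flag as the main obstacle is simply asserted without comment in the paper's proof, so your proposal is in fact more careful than the original.
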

\begin{proof}
Let $\{T_n\}$ be a Cauchy sequence in $\mathfrak{B}^0_\gamma(H)$. Since $\|T_n\|_{\mathfrak{B}^0_\gamma}=\|\mathcal{F}_U(T_n)\|_{L^1(\widehat{G})}$, then $\{\mathcal{F}_U(T_n)\}$ is a Cauchy sequence in $L^1(\widehat{G})$. However,    $L^1(\widehat{G})$ is a complete space. Therefore, there exists $\psi\in L^1(\widehat{G})$ such that $\|\mathcal{F}_U(T_n)-\psi\|_{L^1(\widehat{G})}$ converges to 0 as $n$ goes to $\infty$. Set $W=\mathcal{F}_U^{-1}(\psi)$. We have $\mathcal{F}_U(W)=\psi \in L^1(\widehat{G})$, thus $W\in \mathfrak{B}^0_\gamma(H)$. Finally, 
$$\lim\limits_{n\rightarrow\infty}\|T_n-W\|_{\mathfrak{B}^0_\gamma}=\lim\limits_{n\rightarrow\infty}\|\mathcal{F}_U(T_n)-\psi\|_{L^1(\widehat{G})}=0.$$
Thus, $\mathfrak{B}^0_\gamma (H)$ is complete.
\end{proof}
In the sequel, any linear map whose input and output are operators will be called a {\it transformer}.

Consider the transformer $\mathfrak{Q}_{\gamma,s}$  defined on the spectral Barron space $\mathfrak{B}^{2s}_\gamma (H)$ as follows : 
 
$$\mathfrak{Q}_{\gamma,s}T=\int_{\widehat{G}}(1+\gamma(\xi)^2)^s\mathcal{F}_U(T)(\xi)U_\xi d\xi=\mathcal{F}_U^{-1}\left[(1+\gamma(\xi)^2)^s\mathcal{F}_U(T)\right].$$
Then, $\mathcal{F}_U(\mathfrak{Q}_{\gamma,s}T)(\xi)=(1+\gamma(\xi)^2)^s\mathcal{F}_U(T)(\xi)$,
 and the latter equality  leads to 
$$\|\mathfrak{Q}_{\gamma,s}T\|_{\mathfrak{B}^0_\gamma (H)}=\|T\|_{\mathfrak{B}^{2s}_\gamma (H)}.$$ 

Moreover, let $S\in \mathfrak{B}^{0}_\gamma (H)$. Let $T$ be such that $\mathcal{F}_U(T)=(1+\gamma(\xi)^2)^{-s}\mathcal{F}_U(S)$. We have $T\in \mathfrak{B}^{2s}_\gamma (H)$ and $\mathfrak{Q}_{\gamma,s}T=S$. So, we have proved the following theorem. 

\begin{theorem}\label{Q isometry}
For each $s\geq 0$, the map $\mathfrak{Q}_{\gamma,s}$ is an isometric isomorphism from $\mathfrak{B}^{2s}_\gamma (H)$ onto $\mathfrak{B}^{0}_\gamma (H)$. 
\end{theorem}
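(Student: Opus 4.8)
The plan is to establish, in order, that $\mathfrak{Q}_{\gamma,s}$ is a well-defined linear map $\mathfrak{B}^{2s}_\gamma(H)\to\mathfrak{B}^0_\gamma(H)$, that it preserves norms (so it is injective), and that it is onto; boundedness of the inverse then comes for free, since a surjective isometry between normed spaces has an isometric inverse.

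For well-definedness, I would note that $T\in\mathfrak{B}^{2s}_\gamma(H)$ means precisely $(1+\gamma(\xi)^2)^s\mathcal{F}_U(T)\in L^1(\widehat{G})$, so the integral defining $\mathfrak{Q}_{\gamma,s}T$ converges in the sense of the inversion formula and produces a bounded operator (in fact an element of $\mathcal{K}(H)$ by Proposition \ref{Fulsche Proposition 6.26} when the representation is integrable). Taking $\mathcal{F}_U$ of this operator returns $(1+\gamma(\xi)^2)^s\mathcal{F}_U(T)(\xi)$, which lies in $L^1(\widehat{G})$; since the weight attached to $\mathfrak{B}^0_\gamma(H)$ is trivial, this is exactly the statement $\mathfrak{Q}_{\gamma,s}T\in\mathfrak{B}^0_\gamma(H)$. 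Linearity is immediate from linearity of $\mathcal{F}_U$, of $\mathcal{F}_U^{-1}$, and of pointwise multiplication by the fixed weight. The norm identity then reads
$$\|\mathfrak{Q}_{\gamma,s}T\|_{\mathfrak{B}^0_\gamma(H)}=\|\mathcal{F}_U(\mathfrak{Q}_{\gamma,s}T)\|_{L^1(\widehat{G})}=\int_{\widehat{G}}(1+\gamma(\xi)^2)^s|\mathcal{F}_U(T)(\xi)|\,d\xi=\|T\|_{\mathfrak{B}^{2s}_\gamma(H)},$$
and injectivity follows: if $\mathfrak{Q}_{\gamma,s}T=0$ then $\|T\|_{\mathfrak{B}^{2s}_\gamma(H)}=0$, and since $(1+\gamma(\xi)^2)^s>0$ everywhere this gives $\mathcal{F}_U(T)=0$ almost everywhere, hence $T=0$ by injectivity of the quantum Fourier transform.

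For surjectivity, given $S\in\mathfrak{B}^0_\gamma(H)$ I would put $g(\xi)=(1+\gamma(\xi)^2)^{-s}\mathcal{F}_U(S)(\xi)$; because $s\ge 0$ and $\gamma>0$ one has $0<(1+\gamma(\xi)^2)^{-s}\le 1$, so $|g|\le|\mathcal{F}_U(S)|$ and therefore $g\in L^1(\widehat{G})$. Setting $T=\mathcal{F}_U^{-1}(g)$ via the inversion formula gives $\mathcal{F}_U(T)=g$ and $(1+\gamma(\xi)^2)^s\mathcal{F}_U(T)=\mathcal{F}_U(S)\in L^1(\widehat{G})$, so $T\in\mathfrak{B}^{2s}_\gamma(H)$, and $\mathcal{F}_U(\mathfrak{Q}_{\gamma,s}T)=(1+\gamma(\xi)^2)^s\mathcal{F}_U(T)=\mathcal{F}_U(S)$, whence $\mathfrak{Q}_{\gamma,s}T=S$.

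The norm computation and the elementary estimate $0<(1+\gamma^2)^{-s}\le 1$ are routine. The one point that needs care — and the main obstacle I anticipate — is the repeated use of the identities $\mathcal{F}_U\circ\mathcal{F}_U^{-1}=\mathrm{id}$ and $\mathcal{F}_U^{-1}\circ\mathcal{F}_U=\mathrm{id}$, together with injectivity of $\mathcal{F}_U$, on the classes of operators and functions that actually occur here: this requires that the quantum Fourier transform, a priori defined on $S^1(H)$, extends consistently to the operators built through the inversion formula, which is underwritten by the inversion formula and the transform properties recalled in Section \ref{Preliminaries}.
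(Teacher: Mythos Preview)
Your proof is correct and follows essentially the same route as the paper: compute $\mathcal{F}_U(\mathfrak{Q}_{\gamma,s}T)=(1+\gamma(\xi)^2)^s\mathcal{F}_U(T)$ to obtain the norm identity, then exhibit a preimage of any $S\in\mathfrak{B}^0_\gamma(H)$ via $\mathcal{F}_U(T)=(1+\gamma(\xi)^2)^{-s}\mathcal{F}_U(S)$. You supply more detail than the paper (explicit well-definedness, the bound $(1+\gamma^2)^{-s}\le 1$, and the caveat about Fourier inversion), but the argument is the same.
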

We deduce the following consequence from Theorem \ref{Q isometry}.
\begin{corollary}
$\forall s\geq 0$, the spectral Barron space $\mathfrak{B}^{s}_\gamma (H)$ is a Banach space. 
\end{corollary}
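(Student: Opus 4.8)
The plan is to deduce completeness of $\mathfrak{B}^s_\gamma(H)$ for an arbitrary $s \ge 0$ from the already-established case $s = 0$ (Theorem~\ref{Banach space}) by transporting it along the isometric isomorphism of Theorem~\ref{Q isometry}. The key observation is that Theorem~\ref{Q isometry}, applied with the parameter $s$ replaced by $s/2$ (legitimate since $s \ge 0$ is equivalent to $s/2 \ge 0$), furnishes an isometric isomorphism $\mathfrak{Q}_{\gamma,s/2} : \mathfrak{B}^s_\gamma(H) \to \mathfrak{B}^0_\gamma(H)$.

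I would then recall the elementary fact that an isometric isomorphism $\Phi : X \to Y$ between normed vector spaces is a uniform homeomorphism: it sends Cauchy sequences to Cauchy sequences, since $\|\Phi x_n - \Phi x_m\|_Y = \|x_n - x_m\|_X$, and the same holds for $\Phi^{-1}$; consequently $X$ is complete if and only if $Y$ is. Taking $\Phi = \mathfrak{Q}_{\gamma,s/2}$, $X = \mathfrak{B}^s_\gamma(H)$ and $Y = \mathfrak{B}^0_\gamma(H)$, completeness of $\mathfrak{B}^0_\gamma(H)$ granted by Theorem~\ref{Banach space} yields completeness of $\mathfrak{B}^s_\gamma(H)$. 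Since $\mathfrak{B}^s_\gamma(H)$ is a normed vector space — it is a vector space, as already observed, and the norm axioms for $\|\cdot\|_{\mathfrak{B}^s_\gamma(H)}$ are inherited from the $L^1(\widehat{G})$-norm together with the injectivity of $\mathcal{F}_U$ used throughout — it is a Banach space.

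Alternatively, and with no appeal to the abstract transfer lemma, one could simply repeat verbatim the argument of Theorem~\ref{Banach space}: given a Cauchy sequence $\{T_n\}$ in $\mathfrak{B}^s_\gamma(H)$, the sequence $\{(1+\gamma(\xi)^2)^{s/2}\mathcal{F}_U(T_n)\}$ is Cauchy in $L^1(\widehat{G})$, hence converges to some $\psi$, and one sets $W := \mathcal{F}_U^{-1}\bigl[(1+\gamma(\xi)^2)^{-s/2}\psi\bigr]$ to obtain $T_n \to W$ in $\mathfrak{B}^s_\gamma(H)$. I do not expect any genuine obstacle here: the statement is a formal corollary of the two preceding theorems, the only point requiring a moment's care being the index shift $s \mapsto s/2$ needed to match the normalization in the statement of Theorem~\ref{Q isometry}.
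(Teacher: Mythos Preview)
Your proof is correct and follows exactly the paper's approach: apply Theorem~\ref{Q isometry} with parameter $s/2$ to obtain an isometric isomorphism $\mathfrak{Q}_{\gamma,s/2}\colon \mathfrak{B}^s_\gamma(H)\to\mathfrak{B}^0_\gamma(H)$, then invoke Theorem~\ref{Banach space} and the preservation of completeness under isometric isomorphism. The index shift $s\mapsto s/2$ you flagged is precisely the one the paper uses.
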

\begin{proof}
 By Theorem \ref{Q isometry}, the space $\mathfrak{B}^{s}_\gamma (H)$ is isometrically isomorphic to $\mathfrak{B}^{0}_\gamma (H)$ (the isomorphism map being $\mathfrak{Q}_{\gamma,\frac{s}{2}}$) and by Theorem \ref{Banach space}, the space $\mathfrak{B}^{0}_\gamma (H)$ is complete. Therefore,  $\mathfrak{B}^{s}_\gamma (H)$ is complete. 
\end{proof}

Let us recall that $\mathcal{K}(H)$ is equipped with the uniform norm inherited from the space $\mathcal{B}(H)$ of bounded operators  on $H$.

\begin{theorem}
The transformer $\mathfrak{Q}_{\gamma,s}: \mathfrak{B}^{2s}_\gamma (H) \rightarrow \mathcal{K}(H)$ is bounded and $\|\mathfrak{Q}_{\gamma,s}\|\leq 1$.
\end{theorem}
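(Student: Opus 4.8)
The plan is to reduce the statement directly to Proposition \ref{Fulsche Proposition 6.26}. First I would observe that for $T\in\mathfrak{B}^{2s}_\gamma(H)$ the function $f:=(1+\gamma(\cdot)^2)^{s}\mathcal{F}_U(T)$ is, by the very definition of $\mathfrak{B}^{2s}_\gamma(H)$ (note that $(1+\gamma(\xi)^2)^{2s/2}=(1+\gamma(\xi)^2)^{s}$), an element of $L^1(\widehat{G})$, with
$$\|f\|_{L^1(\widehat{G})}=\int_{\widehat{G}}(1+\gamma(\xi)^2)^{s}|\mathcal{F}_U(T)(\xi)|\,d\xi=\|T\|_{\mathfrak{B}^{2s}_\gamma(H)}.$$

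Next I would recall that, by construction, $\mathfrak{Q}_{\gamma,s}T=\mathcal{F}_U^{-1}(f)$. Applying Proposition \ref{Fulsche Proposition 6.26} (which is available since the standing assumption in this setting is that the representation is integrable) to $f\in L^1(\widehat{G})$ gives immediately that $\mathcal{F}_U^{-1}(f)\in\mathcal{K}(H)$ and
$$\|\mathfrak{Q}_{\gamma,s}T\|_{\mathrm{op}}=\|\mathcal{F}_U^{-1}(f)\|_{\mathrm{op}}\leq\|f\|_{L^1(\widehat{G})}=\|T\|_{\mathfrak{B}^{2s}_\gamma(H)}.$$
This shows $\mathfrak{Q}_{\gamma,s}$ maps $\mathfrak{B}^{2s}_\gamma(H)$ into $\mathcal{K}(H)$ and is bounded with operator norm at most $1$; linearity is clear from the linearity of $\mathcal{F}_U^{-1}$ and of multiplication by the fixed weight $(1+\gamma(\cdot)^2)^s$.

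There is essentially no obstacle here: the only thing to be careful about is invoking Proposition \ref{Fulsche Proposition 6.26}, which requires the representation to be integrable, and checking that the weighted transform lands in $L^1(\widehat{G})$ — but that is exactly the membership condition defining $\mathfrak{B}^{2s}_\gamma(H)$, so nothing further is needed. One could also remark, without it being necessary, that the bound $\|\mathfrak{Q}_{\gamma,s}\|\le 1$ is consistent with the isometry $\|\mathfrak{Q}_{\gamma,s}T\|_{\mathfrak{B}^0_\gamma(H)}=\|T\|_{\mathfrak{B}^{2s}_\gamma(H)}$ established above, together with the embedding $\mathfrak{B}^0_\gamma(H)\hookrightarrow\mathcal{K}(H)$ with norm $\le 1$ that the same proposition provides.
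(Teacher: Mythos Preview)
Your proof is correct and follows essentially the same approach as the paper: both argue that $f=(1+\gamma(\cdot)^2)^s\mathcal{F}_U(T)\in L^1(\widehat{G})$ by definition of $\mathfrak{B}^{2s}_\gamma(H)$, then apply Proposition~\ref{Fulsche Proposition 6.26} to $\mathfrak{Q}_{\gamma,s}T=\mathcal{F}_U^{-1}(f)$ to obtain compactness and the bound $\|\mathfrak{Q}_{\gamma,s}T\|_{\mathrm{op}}\le\|f\|_{L^1(\widehat{G})}=\|T\|_{\mathfrak{B}^{2s}_\gamma(H)}$. Your additional remarks on linearity and on consistency with the isometry are not in the paper but are harmless extras.
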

\begin{proof}
Let $T\in \mathfrak{B}^{2s}_\gamma (H)$. Then, $(1+\gamma(\xi)^2)^{s}\mathcal{F}_U(T)\in L^1(\widehat{G})$ . Thus,  $\mathcal{F}_U^{-1}\left[(1+\gamma(\xi)^2)^{s}\mathcal{F}_U(T)\right]$ is a compact operator (Proposition \ref{Fulsche Proposition 6.26}) and 
$$\left\|\mathcal{F}_U^{-1}\left[(1+\gamma(\xi)^2)^s\mathcal{F}_U(T)\right]\right\|_{\mbox{op}}\leq \|(1+\gamma(\xi)^2)^s\mathcal{F}_U(T)\|_{L^1(\widehat{G})}.$$ 
That is, $\|\mathfrak{Q}_{\gamma,s}  T\|_{\mbox{op}}\leq \|T\|_{\mathfrak{B}^{2s}_\gamma (H)}$.

\end{proof}

In the following, the symbol $\hookrightarrow$ between two Banach spaces will mean that the one on the left embdes continuously  into the one on the right.

\begin{theorem}\label{continuous embedding}
\begin{enumerate}
\item[i)] If $0\leq s\leq t$, then $\mathfrak{B}^t_\gamma(H)\hookrightarrow \mathfrak{B}^s_\gamma(H)$ and 
$$\forall T\in \mathfrak{B}^t_\gamma(H),\, \|T\|_{\mathfrak{B}^s_\gamma(H)}\leq \|T\|_{\mathfrak{B}^t_\gamma(H)}.$$

\item[ii)] Let $0\leq r\leq t$,$s\in [r,t]$, let $\alpha \in [0,1]$ such that   $s=\alpha r + (1-\alpha)t$. Then,
$$\forall
T\in \mathfrak{B}^{t}_\gamma (H),\, \|T\|_{\mathfrak{B}^{s}_\gamma (H)}\leq \|T\|_{\mathfrak{B}^{r}_\gamma (H)}^\alpha \|T\|_{\mathfrak{B}^{t}_\gamma (H)}^{1-\alpha}.$$
\item[iii)] $\mathfrak{B}^0_\gamma(H)\hookrightarrow \mathcal{K}(H)$ and 
$\forall T\in  \mathfrak{B}^0_\gamma(H),\,\|T\|_{\mbox{op}}\leq \|T\|_{\mathfrak{B}^0_\gamma(H)}.$
\end{enumerate}
\end{theorem}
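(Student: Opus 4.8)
The three statements are all consequences of elementary manipulations with the weight $(1+\gamma(\xi)^2)^{1/2}$ under the $L^1(\widehat{G})$-integral defining the Barron norm, together with the already-established embedding $\mathfrak{B}^0_\gamma(H)\hookrightarrow\mathcal{K}(H)$ of the previous theorem. The plan is to prove each part separately, in the order (i), (iii), (ii), since (ii) uses the Hölder-type interpolation idea that is cleanest to state last.

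For part (i), the key observation is that since $\gamma(\xi)>0$, we have $1+\gamma(\xi)^2\geq 1$, hence for $0\leq s\leq t$ the pointwise inequality $(1+\gamma(\xi)^2)^{s/2}\leq (1+\gamma(\xi)^2)^{t/2}$ holds for every $\xi\in\widehat{G}$. Multiplying by $|\mathcal{F}_U(T)(\xi)|\geq 0$ and integrating over $\widehat{G}$ against the Haar measure gives $\|T\|_{\mathfrak{B}^s_\gamma(H)}\leq\|T\|_{\mathfrak{B}^t_\gamma(H)}$; in particular, if $T\in\mathfrak{B}^t_\gamma(H)$ the right-hand side is finite, so $T\in\mathfrak{B}^s_\gamma(H)$, and the inequality between the norms is precisely the statement that the inclusion map is continuous with norm at most $1$. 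Part (iii) is immediate: it is exactly the content of the preceding theorem with $s=0$, where $\mathfrak{Q}_{\gamma,0}$ is the identity transformer, so $\|T\|_{\mathrm{op}}=\|\mathfrak{Q}_{\gamma,0}T\|_{\mathrm{op}}\leq\|T\|_{\mathfrak{B}^0_\gamma(H)}$; alternatively, one applies Proposition \ref{Fulsche Proposition 6.26} directly to $\mathcal{F}_U(T)\in L^1(\widehat{G})$ to get compactness of $T=\mathcal{F}_U^{-1}(\mathcal{F}_U(T))$ together with the norm bound.

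For part (ii), the plan is to write the integrand of the $\mathfrak{B}^s_\gamma(H)$-norm as a product and apply Hölder's inequality with exponents $1/\alpha$ and $1/(1-\alpha)$. Concretely, from $s=\alpha r+(1-\alpha)t$ we factor
\[
(1+\gamma(\xi)^2)^{s/2}|\mathcal{F}_U(T)(\xi)|
=\Bigl[(1+\gamma(\xi)^2)^{r/2}|\mathcal{F}_U(T)(\xi)|\Bigr]^{\alpha}\Bigl[(1+\gamma(\xi)^2)^{t/2}|\mathcal{F}_U(T)(\xi)|\Bigr]^{1-\alpha},
\]
using that $|\mathcal{F}_U(T)(\xi)|=|\mathcal{F}_U(T)(\xi)|^{\alpha}|\mathcal{F}_U(T)(\xi)|^{1-\alpha}$. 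Integrating over $\widehat{G}$ and applying Hölder with the conjugate pair $(1/\alpha,\,1/(1-\alpha))$ yields $\|T\|_{\mathfrak{B}^s_\gamma(H)}\leq\|T\|_{\mathfrak{B}^r_\gamma(H)}^{\alpha}\|T\|_{\mathfrak{B}^t_\gamma(H)}^{1-\alpha}$. One should note the degenerate cases $\alpha=0$ and $\alpha=1$ separately (there Hölder is trivial), and observe that by part (i) the hypothesis $T\in\mathfrak{B}^t_\gamma(H)$ with $r\leq t$ already guarantees $T\in\mathfrak{B}^r_\gamma(H)$, so both factors on the right are finite.

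The only mildly delicate point — and the one I would present carefully — is the bookkeeping in part (ii): making sure the exponents $1/\alpha$ and $1/(1-\alpha)$ are genuinely conjugate (they are, since $\alpha+(1-\alpha)=1$) and that each of the two bracketed functions lies in the appropriate $L^{1/\alpha}$ or $L^{1/(1-\alpha)}$ space, which is exactly the finiteness of $\|T\|_{\mathfrak{B}^r_\gamma(H)}$ and $\|T\|_{\mathfrak{B}^t_\gamma(H)}$ raised to the relevant power. Everything else is a one-line monotonicity argument or a direct citation of the preceding results, so no substantial obstacle is expected.
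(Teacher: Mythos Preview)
Your proposal is correct and follows essentially the same approach as the paper: monotonicity of the weight for (i), the factorization $(1+\gamma(\xi)^2)^{s/2}|\mathcal{F}_U(T)| = [(1+\gamma(\xi)^2)^{r/2}|\mathcal{F}_U(T)|]^{\alpha}[(1+\gamma(\xi)^2)^{t/2}|\mathcal{F}_U(T)|]^{1-\alpha}$ together with H\"older for (ii), and a direct appeal to Proposition~\ref{Fulsche Proposition 6.26} for (iii). Your additional care with the degenerate cases $\alpha\in\{0,1\}$ and the observation that (iii) is the $s=0$ instance of the preceding theorem are welcome refinements, but the core arguments coincide with the paper's.
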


\begin{proof}
\begin{enumerate}
\item[i)] This is trivial when one notices that if $0\leq s\leq t$, then 
$$(1+\gamma(\xi)^2)^{\frac{s}{2}}\leq (1+\gamma(\xi)^2)^{\frac{t}{2}}.$$
\item[ii)] Let $T\in \mathfrak{B}^t_\gamma(H)$. Then $T\in \mathfrak{B}^s_\gamma(H)$ by i). We have 
\begin{align*}
\|T\|_{\mathfrak{B}^s_\gamma(H)}&=\int_{\widehat{G}}(1+\gamma(\xi)^2)^{\frac{s}{2}}|\mathcal{F}_U(T)(\xi)|d\xi\\
&=\int_{\widehat{G}}(1+\gamma(\xi)^2)^{\frac{\alpha r}{2}}|\mathcal{F}_U(T)(\xi)|^\alpha |\mathcal{F}_U(T)(\xi)|^{1-\alpha} (1+\gamma(\xi)^2)^{\frac{(1-\alpha)t}{2}}d\xi\\
&=\left(\int_{\widehat{G}}(1+\gamma(\xi)^2)^{\frac{r}{2}}|\mathcal{F}_U(T)(\xi)| d\xi\right)^\alpha \left(\int_{\widehat{G}}(1+\gamma(\xi)^2)^{\frac{t}{2}}|\mathcal{F}_U(T)(\xi)| d\xi\right)^{1-\alpha}     \\
&(\mbox{by the H\"older inequality})\\
&\leq \|T\|_{\mathfrak{B}^r_\gamma(H)}^\alpha \|T\|_{\mathfrak{B}^t_\gamma(H)}^{1-\alpha}.
\end{align*}
\item[iii)] Let $T\in \mathfrak{B}^0_\gamma (H)$. Then, $\mathcal{F}_U(T)\in L^1(\widehat{G})$. By Proposition \ref{Fulsche Proposition 6.26}, we have 
$\mathcal{F}_U^{-1}(\mathcal{F}_U(T))=T\in \mathcal{K}(H)$ and $\|T\|_{\mbox{op}}\leq \|\mathcal{F}_U(T)\|_{L^1(\widehat{G})}$; that is $\|T\|_{\mbox{op}}\leq \|T\|_{\mathfrak{B}^0_\gamma(H)}$.
\end{enumerate}
\end{proof}

Let us recall Peetre's inequality (see \cite{Barros}). Here $\|x\|$ designates the euclidean  norm of $x$ in $\mathbb{R}^n$.
\begin{theorem}
Let $x,y\in \mathbb{R}^n$ and let $s\in \mathbb{R}$. Then, 
$$\left(\frac{1+\|x\|^2}{1+\|y\|^2}\right)^s\leq 2^{|s|}(1+\|x-y\|^2)^{|s|}.$$
\end{theorem}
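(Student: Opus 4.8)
The plan is to reduce everything to the case $s \geq 0$ and then to a single elementary inequality relating the three quantities $1+\|x\|^2$, $1+\|y\|^2$ and $1+\|x-y\|^2$.

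First I would exploit the symmetry of the statement under $s \mapsto -s$: since $\left(\frac{1+\|x\|^2}{1+\|y\|^2}\right)^{-s} = \left(\frac{1+\|y\|^2}{1+\|x\|^2}\right)^{s}$ and $\|x-y\| = \|y-x\|$, once the inequality is known for all $x,y \in \mathbb{R}^n$ and all $s \geq 0$ it follows for $s < 0$ as well, after interchanging the roles of $x$ and $y$. Hence it suffices to treat the case $s \geq 0$, in which $|s| = s$.

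Next I would establish the key claim
$$1 + \|x\|^2 \leq 2\,(1+\|x-y\|^2)(1+\|y\|^2).$$
The triangle inequality gives $\|x\| \leq \|x-y\| + \|y\|$, and squaring together with the elementary bound $(a+b)^2 \leq 2a^2 + 2b^2$ yields $\|x\|^2 \leq 2\|x-y\|^2 + 2\|y\|^2$. Consequently $1 + \|x\|^2 \leq 2 + 2\|x-y\|^2 + 2\|y\|^2$, and since the expansion of the right-hand side of the claim equals $2 + 2\|y\|^2 + 2\|x-y\|^2 + 2\|x-y\|^2\|y\|^2$, which carries the extra nonnegative term $2\|x-y\|^2\|y\|^2$, the claim follows.

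Finally, dividing the key claim by $1+\|y\|^2 > 0$ gives $\frac{1+\|x\|^2}{1+\|y\|^2} \leq 2(1+\|x-y\|^2)$, and raising both sides to the power $s \geq 0$ — legitimate because $t \mapsto t^s$ is nondecreasing on $(0,\infty)$ — produces exactly $\left(\frac{1+\|x\|^2}{1+\|y\|^2}\right)^s \leq 2^s(1+\|x-y\|^2)^s$, which is the assertion since $|s| = s$. There is no genuine obstacle in this argument; the only point that deserves a moment's care is the initial reduction to $s \geq 0$, after which the proof is a one-line computation built from the triangle inequality and the convexity estimate $(a+b)^2 \leq 2(a^2+b^2)$.
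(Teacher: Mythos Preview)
Your proof is correct. The paper does not supply its own proof of Peetre's inequality; it merely records the statement with a reference to Barros-Neto's book, so there is nothing substantive to compare against. The argument you give --- reducing to $s\geq 0$ via the symmetry $x\leftrightarrow y$, establishing $1+\|x\|^2\leq 2(1+\|x-y\|^2)(1+\|y\|^2)$ from the triangle inequality together with $(a+b)^2\leq 2(a^2+b^2)$, and then raising to the $s$-th power --- is the standard elementary proof of this inequality.
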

\vspace{1cm}

 The following result holds. It states the stability of the spectral Barron space $\mathfrak{B}^s_\gamma(H)$  with respect to the composition of operators. 

\begin{theorem}
Let $s\geq 0$. If $S,T\in \mathfrak{B}^s_\gamma(H)$, then $ST \in \mathfrak{B}^s_\gamma(H)$ and 
$$\|ST\|_{\mathfrak{B}^s_\gamma(H)}\leq 2^{\frac{s}{2}}\|S\|_{\mathfrak{B}^s_\gamma(H)} \|T\|_{\mathfrak{B}^{s}}.$$
\end{theorem}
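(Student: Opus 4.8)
The plan is to reduce the multiplicative estimate to a pointwise inequality for the quantum Fourier transforms via the twisted convolution formula, and then integrate. First I would invoke Proposition \ref{Fulsche Corollary 6.24} to write $\mathcal{F}_U(ST) = \mathcal{F}_U(S) \ast_m \mathcal{F}_U(T)$, so that for $\xi \in \widehat{G}$,
$$|\mathcal{F}_U(ST)(\xi)| \leq \int_{\widehat{G}} |\mathcal{F}_U(S)(\xi-\eta)|\,|\mathcal{F}_U(T)(\eta)|\,d\eta,$$
using $|m(\xi-\eta,\eta)| = 1$ (and here one must either assume the integrability hypothesis on $\rho$ so that the inversion/convolution machinery applies, or note that for $S,T \in S^1(H)$ this holds unconditionally as stated). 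Multiplying by $(1+\gamma(\xi)^2)^{s/2}$ and integrating over $\xi$, I would then want to distribute the weight $(1+\gamma(\xi)^2)^{s/2}$ across the two factors using a Peetre-type inequality in the variable $\gamma$: writing $\gamma(\xi)$, $\gamma(\eta)$ and hoping for something like $(1+\gamma(\xi)^2)^{s/2} \leq 2^{s/2}(1+\gamma(\xi-\eta)^2)^{s/2}(1+\gamma(\eta)^2)^{s/2}$, after which Tonelli's theorem and a change of variables split the double integral into the product $\|S\|_{\mathfrak{B}^s_\gamma(H)}\|T\|_{\mathfrak{B}^s_\gamma(H)}$, yielding the factor $2^{s/2}$.

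The key steps in order are: (1) apply the twisted convolution identity and take absolute values; (2) establish the weight-splitting inequality $(1+\gamma(\xi)^2)^{s/2} \leq 2^{s/2}(1+\gamma(\xi-\eta)^2)^{s/2}(1+\gamma(\eta)^2)^{s/2}$ for all $\xi,\eta$; (3) substitute into the integral, apply Tonelli to interchange the order of integration; (4) perform the substitution $\xi \mapsto \xi - \eta$ (at fixed $\eta$) using translation-invariance of the Haar measure on $\widehat{G}$, which factors the double integral as $\left(\int_{\widehat{G}} (1+\gamma(\zeta)^2)^{s/2}|\mathcal{F}_U(S)(\zeta)|\,d\zeta\right)\left(\int_{\widehat{G}}(1+\gamma(\eta)^2)^{s/2}|\mathcal{F}_U(T)(\eta)|\,d\eta\right)$; (5) conclude that this product is finite, hence $(1+\gamma(\xi)^2)^{s/2}\mathcal{F}_U(ST) \in L^1(\widehat{G})$, so $ST \in \mathfrak{B}^s_\gamma(H)$, with the claimed norm bound.

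The main obstacle is step (2): the quoted Peetre inequality is stated only for the Euclidean norm $\|\cdot\|$ on $\mathbb{R}^n$, whereas here $\gamma : \widehat{G} \to (0,\infty)$ is an arbitrary measurable weight and there is no reason a general $\gamma$ satisfies any subadditivity like $\gamma(\xi) \leq \gamma(\xi-\eta) + \gamma(\eta)$. I expect the proof to implicitly require $\gamma$ to arise from (or dominate) a norm-type structure on $\widehat{G}$ — for instance $\gamma(\xi) = \|\xi\|$ under an identification $\widehat{G} \cong \mathbb{R}^n$ via the Heisenberg isomorphism, or more generally a $\gamma$ that is subadditive — so that Peetre's inequality applies with $x = \xi$, $y = \eta$ to give exactly the factor $2^{|s/2|} = 2^{s/2}$ (since $s \geq 0$, and dropping the $(1+\|x-y\|^2)^{|s|}$ part is not what we want; rather one uses $(1+\|x\|^2)^{s/2} \leq 2^{s/2}(1+\|x-y\|^2)^{s/2}(1+\|y\|^2)^{s/2}$, which is a standard consequence). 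So the real content is matching the abstract weight $\gamma$ to the hypotheses under which such a splitting holds; once that is granted, steps (1), (3), (4), (5) are routine applications of Tonelli and Haar-measure invariance.
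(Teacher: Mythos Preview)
Your approach is essentially identical to the paper's: apply the twisted convolution identity, take absolute values, split the weight via Peetre's inequality, then use Fubini and translation invariance of the Haar measure to factor the double integral. Your concern about step~(2) is apt --- the paper likewise invokes Peetre's inequality with $\gamma$ in place of the Euclidean norm without additional justification, so the implicit subadditivity hypothesis you flag is one the paper itself relies on tacitly.
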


\begin{proof}
Let $S,T\in \mathfrak{B}^s_\gamma(H)$. Then, we have 
\begin{align*}
|\mathcal{F}_U(ST)(\xi)|&=\left|\mathcal{F}_U(S) \ast_m\mathcal{F}_U(T)(\xi)\right|\\
&=\left|\int_{\widehat{G}}\mathcal{F}_U(S)(\xi-\eta)\mathcal{F}_U(T)(\eta)m(\xi-\eta,\eta)d\eta\right|\\
&\leq \int_{\widehat{G}}|\mathcal{F}_U(S)(\xi-\eta)||\mathcal{F}_U(T)(\eta)|d\eta. 
\end{align*}
Now, using Peetre's inequality, we attain
$$(1+\gamma(\xi)^2)^{\frac{s}{2}}|\mathcal{F}_U(ST)(\xi)|\leq (1+\gamma(\eta)^2)^{\frac{s}{2}} 2^{\frac{s}{2}}(1+\gamma(\xi-\eta)^2)^{\frac{s}{2}}\int_{\widehat{G}}|\mathcal{F}_U(S)(\xi-\eta)||\mathcal{F}_U(T)(\eta)|d\eta.$$
Therefore, 
\begin{align*}
\int_{\widehat{G}}(1+\gamma(\xi)^2)^{\frac{s}{2}}|\mathcal{F}_U(ST)(\xi)|d\xi &\leq 2^{\frac{s}{2}}\int_{\widehat{G}}\int_{\widehat{G}}(1+\gamma(\eta)^2)^{\frac{s}{2}} (1+\gamma(\xi-\eta)^2)^{\frac{s}{2}}\times \\
&\times |\mathcal{F}_U(S)(\xi-\eta)||\mathcal{F}_U(T)(\eta)|d\eta d\xi.
\end{align*}
Now, using succesively Fubini's Theorem and the invariance by translation of the Haar measure on $\widehat{G}$, we obtain
\begin{align*}
\int_{\widehat{G}}(1+\gamma(\xi)^2)^{\frac{s}{2}}|\mathcal{F}_U(ST)(\xi)|d\xi &\leq 2^{\frac{s}{2}}\int_{\widehat{G}}(1+\gamma(\xi)^2)^{\frac{s}{2}}|\mathcal{F}_U(S)(\xi)|d\xi\times\\
&\times \int_{\widehat{G}}(1+\gamma(\eta)^2)^{\frac{s}{2}}|\mathcal{F}_U(S)(\eta)|d\eta.
\end{align*}
That is, $ \|ST\|_{\mathfrak{B}^s_\gamma(H)}\leq 2^{\frac{s}{2}}\|S\|_{\mathfrak{B}^s_\gamma(H)} \|T\|_{\mathfrak{B}^s_\gamma(H)}$.
\end{proof}

Let us consider the (quantum) Sobolev space $$\mathfrak{H}^s_\gamma=\left\{T\in S^2(H) : (1+\gamma(\xi)^2)^{\frac{s}{2}}\mathcal{F}_U(T)\in L^2(\widehat{G})\right\}.$$   This space is equipped with the norm :
\begin{equation}
\|T\|_{\mathfrak{H}^s_\gamma}=\left( \int_{\widehat{G}}(1+\gamma(\xi)^2)^s|\mathcal{F}_U(T)(\xi)|^2d\xi \right)^{\frac{1}{2}}.
\end{equation} 

Under an additional condition, we prove in the following theorem that the  Sobolev space $\mathfrak{H}^s_\gamma$ embeds continuously into the spectral Barron space $\mathfrak{B}^s_\gamma(H)$. 

\begin{theorem} 
Let $t>s \geq 0$. If $(1+\gamma(\cdot)^2)^{\frac{s-t}{2}}\in L^2(\widehat{G})$, then 
$\mathfrak{H}^t_\gamma\hookrightarrow \mathfrak{B}^s_\gamma(H)$ and 
$$\forall T\in \mathfrak{H}^t_\gamma,\,  \|T\|_{\mathfrak{B}^s_\gamma(H)}\leq \|(1+\gamma(\cdot)^2)^{\frac{s-t}{2}}\|_{L^2(\widehat{G})}\|T\|_{\mathfrak{H}^t_\gamma}.$$
\end{theorem}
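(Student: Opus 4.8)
The plan is to write out the Barron norm $\|T\|_{\mathfrak{B}^s_\gamma(H)}$ as an $L^1$-integral, insert the factor $(1+\gamma(\xi)^2)^{t/2}(1+\gamma(\xi)^2)^{-t/2}$, and apply the Cauchy--Schwarz inequality to split off the Sobolev weight from the tempering weight. Concretely, for $T\in\mathfrak{H}^t_\gamma$ one writes
\begin{align*}
\|T\|_{\mathfrak{B}^s_\gamma(H)}
&=\int_{\widehat{G}}(1+\gamma(\xi)^2)^{\frac{s}{2}}|\mathcal{F}_U(T)(\xi)|\,d\xi\\
&=\int_{\widehat{G}}(1+\gamma(\xi)^2)^{\frac{s-t}{2}}\cdot(1+\gamma(\xi)^2)^{\frac{t}{2}}|\mathcal{F}_U(T)(\xi)|\,d\xi.
\end{align*}
Applying the Cauchy--Schwarz inequality to the two factors in the last integrand yields
$$\|T\|_{\mathfrak{B}^s_\gamma(H)}\leq\left(\int_{\widehat{G}}(1+\gamma(\xi)^2)^{s-t}\,d\xi\right)^{\frac12}\left(\int_{\widehat{G}}(1+\gamma(\xi)^2)^{t}|\mathcal{F}_U(T)(\xi)|^2\,d\xi\right)^{\frac12},$$
and the two factors on the right are exactly $\|(1+\gamma(\cdot)^2)^{\frac{s-t}{2}}\|_{L^2(\widehat{G})}$ and $\|T\|_{\mathfrak{H}^t_\gamma}$, which gives the claimed estimate.

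It remains to check that this computation is legitimate, i.e. that $(1+\gamma(\xi)^2)^{\frac{s}{2}}\mathcal{F}_U(T)$ actually lies in $L^1(\widehat{G})$ so that $T\in\mathfrak{B}^s_\gamma(H)$; but the finiteness of the right-hand side above is precisely this integrability, since $T\in\mathfrak{H}^t_\gamma$ guarantees the second factor is finite and the hypothesis $(1+\gamma(\cdot)^2)^{\frac{s-t}{2}}\in L^2(\widehat{G})$ guarantees the first factor is finite. Hence $T\in\mathfrak{B}^s_\gamma(H)$ and the displayed norm inequality holds for every $T\in\mathfrak{H}^t_\gamma$, which is exactly the statement that the inclusion $\mathfrak{H}^t_\gamma\hookrightarrow\mathfrak{B}^s_\gamma(H)$ is continuous with the stated operator-norm bound. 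One should also note in passing that the map is well-defined in the sense that an element of $S^2(H)$ with the stated property is in particular a bounded operator (indeed compact, by the third density statement combined with Theorem~\ref{continuous embedding} once one knows it lies in $\mathfrak{B}^0_\gamma(H)$, but for the statement at hand membership in $\mathcal{B}(H)$ via $S^2(H)\subset\mathcal{K}(H)\subset\mathcal{B}(H)$ suffices).

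I do not expect a genuine obstacle here: the only subtlety is the bookkeeping of exponents ($s-t<0$, so the weight $(1+\gamma(\xi)^2)^{(s-t)/2}$ is the decaying factor that must be square-integrable), and making sure the Cauchy--Schwarz split is applied with the correct pairing of half-powers. Everything else is a one-line application of Cauchy--Schwarz together with the definitions of the two norms. The hypothesis $t>s$ is used only to make $s-t<0$ so that requiring $(1+\gamma(\cdot)^2)^{\frac{s-t}{2}}\in L^2(\widehat{G})$ is a decay condition on $\gamma$ rather than an automatically false one; the computation itself would go through formally for any real $s,t$.
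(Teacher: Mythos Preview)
Your proof is correct and follows essentially the same route as the paper: split the weight as $(1+\gamma(\xi)^2)^{\frac{s-t}{2}}(1+\gamma(\xi)^2)^{\frac{t}{2}}$ and apply Cauchy--Schwarz (the paper phrases it as H\"older). Your additional remarks on integrability and well-definedness go slightly beyond what the paper writes out, but the core argument is identical.
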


\begin{proof}
\begin{align*}
\|T\|_{\mathfrak{B}^s_\gamma(H)}&=\int_{\widehat{G}}(1+\gamma(\xi)^2)^{\frac{s}{2}}|\mathcal{F}_U(T)(\xi)|d\xi\\
&=\int_{\widehat{G}}(1+\gamma(\xi)^2)^{\frac{s-t}{2}}(1+\gamma(\xi)^2)^{\frac{t}{2}}|\mathcal{F}_U(T)(\xi)|d\xi\\
&\leq \left(\int_{\widehat{G}}(1+\gamma(\xi)^2)^{s-t}d\xi\right)^{\frac{1}{2}}\left(\int_{\widehat{G}}(1+\gamma(\xi)^2)^{t}|\mathcal{F}_U(T)(\xi)|^2 d\xi\right)^{\frac{1}{2}}\\
&(\mbox{by the H\"older inequality.})\\
&=\|(1+\gamma(\cdot)^2)^{\frac{s-t}{2}}\|_{L^2(\widehat{G})}\|T\|_{\mathfrak{H}^t_\gamma}.
\end{align*}
\end{proof}

Let $\alpha$ be a positive real number. Put $\mathfrak{Q}_{\gamma,\alpha}=\alpha \mathfrak{I}-\Delta$, where $\mathfrak{I}$ is the identity transformer and $\Delta$ is the Laplacian defined by $\Delta T=-\mathcal{F}_U^{-1}[\gamma (\xi)^2\mathcal{F}_U(T)]$. 
 Then, 
\begin{equation}
\mathfrak{Q}_{\gamma,\alpha}T=\mathcal{F}_U^{-1}[(\alpha +\gamma (\xi)^2)\mathcal{F}_U(T)].
\end{equation}
Clearly, the transformer $\mathfrak{Q}_{\gamma,\alpha}$ is injective.  We have the following result. 
\begin{theorem} Let $s\geq 0$.
If $T\in \mathfrak{B}^s_\gamma(H)$, then 
$\|\mathfrak{Q}_{\gamma,\alpha}^{-1}T\|_{\mathfrak{B}^s_\gamma(H)}\leq \displaystyle\frac{1}{\alpha}\|T\|_{\mathfrak{B}^s_\gamma(H)}$.
\end{theorem}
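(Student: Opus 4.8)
The plan is to work entirely on the Fourier side, exploiting that $\mathfrak{Q}_{\gamma,\alpha}$ is a Fourier multiplier with symbol $\alpha+\gamma(\xi)^2$, so that its inverse should be the multiplier with symbol $(\alpha+\gamma(\xi)^2)^{-1}$. First I would make this precise. Given $T\in\mathfrak{B}^s_\gamma(H)$, set $g(\xi):=(\alpha+\gamma(\xi)^2)^{-1}\mathcal{F}_U(T)(\xi)$. Since $\gamma(\xi)^2\ge 0$ we have the pointwise bound $|g(\xi)|\le\alpha^{-1}|\mathcal{F}_U(T)(\xi)|$, and more precisely $(1+\gamma(\xi)^2)^{s/2}|g(\xi)|\le\alpha^{-1}(1+\gamma(\xi)^2)^{s/2}|\mathcal{F}_U(T)(\xi)|$, whose right-hand side is integrable because $T\in\mathfrak{B}^s_\gamma(H)$. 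In particular $g\in L^1(\widehat G)$, so by Proposition \ref{Fulsche Proposition 6.26} the operator $W:=\mathcal{F}_U^{-1}(g)$ is a well-defined (compact) bounded operator with $\mathcal{F}_U(W)=g$. Using the inversion formula and the injectivity of $\mathcal{F}_U$ on $S^1(H)$, exactly as in the discussion preceding Theorem \ref{Q isometry}, one checks $\mathfrak{Q}_{\gamma,\alpha}W=\mathcal{F}_U^{-1}[(\alpha+\gamma(\cdot)^2)g]=\mathcal{F}_U^{-1}[\mathcal{F}_U(T)]=T$, so by the injectivity of $\mathfrak{Q}_{\gamma,\alpha}$ noted just before the statement, $W=\mathfrak{Q}_{\gamma,\alpha}^{-1}T$ and $\mathcal{F}_U(\mathfrak{Q}_{\gamma,\alpha}^{-1}T)=(\alpha+\gamma(\cdot)^2)^{-1}\mathcal{F}_U(T)$.

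The second and essentially final step is a one-line estimate. Since $\alpha+\gamma(\xi)^2\ge\alpha$ for every $\xi\in\widehat G$, we have $(\alpha+\gamma(\xi)^2)^{-1}\le\alpha^{-1}$; multiplying by the nonnegative weight $(1+\gamma(\xi)^2)^{s/2}|\mathcal{F}_U(T)(\xi)|$ and integrating over $\widehat G$ yields
$$\|\mathfrak{Q}_{\gamma,\alpha}^{-1}T\|_{\mathfrak{B}^s_\gamma(H)}=\int_{\widehat G}(1+\gamma(\xi)^2)^{\frac{s}{2}}\frac{|\mathcal{F}_U(T)(\xi)|}{\alpha+\gamma(\xi)^2}\,d\xi\le\frac{1}{\alpha}\int_{\widehat G}(1+\gamma(\xi)^2)^{\frac{s}{2}}|\mathcal{F}_U(T)(\xi)|\,d\xi=\frac{1}{\alpha}\|T\|_{\mathfrak{B}^s_\gamma(H)},$$
which is the claimed bound; this also re-confirms $\mathfrak{Q}_{\gamma,\alpha}^{-1}T\in\mathfrak{B}^s_\gamma(H)$.

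I do not expect a genuine obstacle here: the inequality is immediate from $\alpha+\gamma(\xi)^2\ge\alpha$. The only point deserving a word of care is the first step, namely that $\mathfrak{Q}_{\gamma,\alpha}^{-1}T$ actually exists as an element of $\mathfrak{B}^s_\gamma(H)$ (equivalently, that $T$ lies in the range of $\mathfrak{Q}_{\gamma,\alpha}$ on $\mathfrak{B}^s_\gamma(H)$) and that the symbolic identity $\mathcal{F}_U(\mathfrak{Q}_{\gamma,\alpha}^{-1}T)=(\alpha+\gamma(\cdot)^2)^{-1}\mathcal{F}_U(T)$ is legitimate; both are handled by Proposition \ref{Fulsche Proposition 6.26}, the inversion formula, and the injectivity of $\mathcal{F}_U$, as above.
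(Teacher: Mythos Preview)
Your proof is correct and follows essentially the same route as the paper: pass to the Fourier side, use that $\mathfrak{Q}_{\gamma,\alpha}^{-1}$ corresponds to multiplication by $(\alpha+\gamma(\xi)^2)^{-1}$, and apply the pointwise bound $(\alpha+\gamma(\xi)^2)^{-1}\le\alpha^{-1}$ inside the $L^1$ integral. Your version is in fact more careful than the paper's, since you explicitly verify that $T$ lies in the range of $\mathfrak{Q}_{\gamma,\alpha}$ (so that $\mathfrak{Q}_{\gamma,\alpha}^{-1}T$ is well defined in $\mathfrak{B}^s_\gamma(H)$), a point the paper tacitly assumes.
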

\begin{proof}
Soit $T\in \mathfrak{B}^s_\gamma(H)$. Set $S=\mathfrak{Q}_{\gamma,\alpha}^{-1}T$. Then, $\mathcal{F}_U(T)=(\alpha +\gamma (\xi)^2)\mathcal{F}_U(S)$.
\begin{align*}
\|S\|_{\mathfrak{B}^s_\gamma(H)}&=\|(1 +\gamma (\xi)^2)^{\frac{s}{2}}\mathcal{F}_U(S)\|_{L^1(\widehat{G})}\\
&=\frac{1}{\alpha +\gamma (\xi)^2}\|(1 +\gamma (\xi)^2)^{\frac{s}{2}}\mathcal{F}_U(T)\|_{L^1(\widehat{G})}\\
&\leq \frac{1}{\alpha}\|(1 +\gamma (\xi)^2)^{\frac{s}{2}}\mathcal{F}_U(T)\|_{L^1(\widehat{G})}\\
&=\frac{1}{\alpha}\|T\|_{\mathfrak{B}^s_\gamma(H)}.
\end{align*}
\end{proof}

\section{Application : Schr\"odinger-type equations for operators}\label{Schrodinger}
The Schr\"odinger equation is a cornerstone of quantum mechanics and it governs the microscopic world. In our framework, we consider the Schr\"odinger-type equation 
\begin{equation}\label{Schrodinger-type equation}
(\mathfrak{I}-\Delta +V)S=T
\end{equation}
where the potential $V$ and the given operator $T$ are both in the spectral Barron space $\mathfrak{B}_\gamma^{0}(H)$. The operator $S$ is the unknown. 
The equation (\ref{Schrodinger-type equation}) can be written using the transformer $\mathfrak{Q}_{\gamma,1}$ as :
\begin{equation}\label{Schrodinger-type equation 2}
(\mathfrak{Q}_{\gamma,1} +V)S=T.
\end{equation}
We prove the existence of a unique solution to the equation (\ref{Schrodinger-type equation}) for potentials belonging to the open unit ball of $\mathfrak{B}_\gamma^{0}(H)$. 
Put 
$$\mathfrak{U}_0=\{T\in \mathfrak{B}_\gamma^{0}(H) : \|T\|_{\mathfrak{B}_\gamma^{0}(H)}<1\}.$$

\begin{theorem}
Let $V\in \mathfrak{U}_0$ and $T\in \mathfrak{B}_\gamma^{0}(H)$. The  Schr\"odinger-type equation (\ref{Schrodinger-type equation}) has a unique solution $S_*\in \mathfrak{B}_\gamma^{2}(H)$. Moreover,
\begin{equation}
\|S_*\|_{\mathfrak{B}_\gamma^{2}(H)}\leq (1-\|V\|_{\mathfrak{B}_\gamma^{0}(H)})^{-1}\|T\|_{\mathfrak{B}_\gamma^{0}(H)}.
\end{equation} 
\end{theorem}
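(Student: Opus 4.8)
The plan is to turn (\ref{Schrodinger-type equation}) into a fixed-point equation and invoke the Banach contraction principle (Theorem~\ref{BCP}). With the notation $\mathfrak{Q}_{\gamma,1}=\mathfrak{I}-\Delta$ the equation reads $(\mathfrak{Q}_{\gamma,1}+V)S=T$, i.e.\ (\ref{Schrodinger-type equation 2}). By Theorem~\ref{Q isometry} with $s=1$, $\mathfrak{Q}_{\gamma,1}$ is an isometric isomorphism of $\mathfrak{B}_\gamma^{2}(H)$ onto $\mathfrak{B}_\gamma^{0}(H)$, so $\mathfrak{Q}_{\gamma,1}^{-1}\colon\mathfrak{B}_\gamma^{0}(H)\to\mathfrak{B}_\gamma^{2}(H)$ is a well-defined isometric isomorphism. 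Applying it to $\mathfrak{Q}_{\gamma,1}S=T-VS$, the equation is equivalent to $S=\Phi(S)$, where
\[
\Phi(S):=\mathfrak{Q}_{\gamma,1}^{-1}T-\mathfrak{Q}_{\gamma,1}^{-1}(VS).
\]
It therefore suffices to prove that $\Phi$ is a contraction of the complete metric space $\mathfrak{B}_\gamma^{2}(H)$ (which is a Banach space by the corollary to Theorem~\ref{Q isometry}).

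First I would check that $\Phi$ maps $\mathfrak{B}_\gamma^{2}(H)$ into itself. Since $T\in\mathfrak{B}_\gamma^{0}(H)$ we have $\mathfrak{Q}_{\gamma,1}^{-1}T\in\mathfrak{B}_\gamma^{2}(H)$; and if $S\in\mathfrak{B}_\gamma^{2}(H)$, then $S\in\mathfrak{B}_\gamma^{0}(H)$ by Theorem~\ref{continuous embedding}(i), so the multiplicativity estimate $\|ST\|_{\mathfrak{B}_\gamma^{s}(H)}\le 2^{s/2}\|S\|_{\mathfrak{B}_\gamma^{s}(H)}\|T\|_{\mathfrak{B}_\gamma^{s}(H)}$ proved above, taken at $s=0$, gives $VS\in\mathfrak{B}_\gamma^{0}(H)$ with $\|VS\|_{\mathfrak{B}_\gamma^{0}(H)}\le\|V\|_{\mathfrak{B}_\gamma^{0}(H)}\|S\|_{\mathfrak{B}_\gamma^{0}(H)}$, hence $\mathfrak{Q}_{\gamma,1}^{-1}(VS)\in\mathfrak{B}_\gamma^{2}(H)$. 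For the contraction estimate, let $S_1,S_2\in\mathfrak{B}_\gamma^{2}(H)$. Using $VS_1-VS_2=V(S_1-S_2)$ and linearity of $\mathfrak{Q}_{\gamma,1}^{-1}$ one gets $\Phi(S_1)-\Phi(S_2)=-\mathfrak{Q}_{\gamma,1}^{-1}\big(V(S_1-S_2)\big)$, and then the isometry of $\mathfrak{Q}_{\gamma,1}^{-1}$, the multiplicativity estimate at $s=0$, and the embedding $\mathfrak{B}_\gamma^{2}(H)\hookrightarrow\mathfrak{B}_\gamma^{0}(H)$ (Theorem~\ref{continuous embedding}(i)) give
\[
\|\Phi(S_1)-\Phi(S_2)\|_{\mathfrak{B}_\gamma^{2}(H)}=\|V(S_1-S_2)\|_{\mathfrak{B}_\gamma^{0}(H)}\le\|V\|_{\mathfrak{B}_\gamma^{0}(H)}\,\|S_1-S_2\|_{\mathfrak{B}_\gamma^{2}(H)}.
\]
Because $V\in\mathfrak{U}_0$, the number $h:=\|V\|_{\mathfrak{B}_\gamma^{0}(H)}$ satisfies $h<1$, so $\Phi$ is a contraction.

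By Theorem~\ref{BCP}, $\Phi$ has a unique fixed point $S_*\in\mathfrak{B}_\gamma^{2}(H)$, which by the equivalence above is the unique solution of (\ref{Schrodinger-type equation}) in $\mathfrak{B}_\gamma^{2}(H)$. For the a priori bound I would take norms in $S_*=\Phi(S_*)$: the triangle inequality, the isometry of $\mathfrak{Q}_{\gamma,1}^{-1}$, the multiplicativity estimate and the embedding yield
\[
\|S_*\|_{\mathfrak{B}_\gamma^{2}(H)}\le\|T\|_{\mathfrak{B}_\gamma^{0}(H)}+\|V\|_{\mathfrak{B}_\gamma^{0}(H)}\|S_*\|_{\mathfrak{B}_\gamma^{2}(H)},
\]
and since $\|V\|_{\mathfrak{B}_\gamma^{0}(H)}<1$ this rearranges to the claimed estimate. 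The one point that needs care is the bookkeeping of spaces: one must make sure that $VS$ returns to $\mathfrak{B}_\gamma^{0}(H)$ so that $\mathfrak{Q}_{\gamma,1}^{-1}$ may be applied and the iteration stays in $\mathfrak{B}_\gamma^{2}(H)$. This is precisely where the multiplicativity of the Barron norm at $s=0$ and the embedding $\mathfrak{B}_\gamma^{2}(H)\hookrightarrow\mathfrak{B}_\gamma^{0}(H)$ are used; once this is in place, the contraction estimate and the a priori bound are routine.
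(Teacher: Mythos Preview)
Your proof is correct and follows essentially the same strategy as the paper: rewrite the equation as a fixed-point problem for $\mathfrak{Q}_{\gamma,1}^{-1}(T-VS)$ and apply the Banach contraction principle, using the isometry of $\mathfrak{Q}_{\gamma,1}$ (Theorem~\ref{Q isometry}), the multiplicativity estimate at $s=0$, and the embedding of Theorem~\ref{continuous embedding}(i). The only cosmetic difference is that you run the contraction directly on $\mathfrak{B}_\gamma^{2}(H)$, whereas the paper runs it on $\mathfrak{B}_\gamma^{0}(H)$ and then observes a posteriori that the fixed point lies in $\mathfrak{B}_\gamma^{2}(H)$; both routes use the same ingredients and yield the same a priori bound.
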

\begin{proof}
Assume that $V\in \mathfrak{U}_0$ and $T\in \mathfrak{B}_\gamma^{0}(H)$. 
Rewrite  the equation (\ref{Schrodinger-type equation 2}) as 
\begin{align*}
\mathfrak{Q}_{\gamma,1}S= -VS+T.
\end{align*}
Since $\mathfrak{Q}_{\gamma,1}$ is an isomorphism (from $\mathfrak{B}_\gamma^{2}(H)$ onto $\mathfrak{B}_\gamma^{0}(H)$), then  $\mathfrak{Q}_{\gamma,1}$ is invertible. Furthermore,   $\mathfrak{Q}_{\gamma,1}^{-1}$ is also an isomorphism (from $\mathfrak{B}_\gamma^{0}(H)$ onto $\mathfrak{B}_\gamma^{2}(H)$).  
We have 
$$S=\mathfrak{Q}_{\gamma,1}^{-1}(-VS+T).$$
Let us consider the map $\mathfrak{E}$ defined from $\mathfrak{B}_\gamma^{0}(H)$ into itself by
\begin{equation}
\mathfrak{E}S=\mathfrak{Q}_{\gamma,1}^{-1}(-VS+T).
\end{equation}
Let $X,Y\in \mathfrak{B}_\gamma^{0}(H)$. We have
\begin{align*}
\|\mathfrak{E}X-\mathfrak{E}Y\|_{\mathfrak{B}_\gamma^{0}(H)}&\leq \|V\|_{\mathfrak{B}_\gamma^{0}(H)}\|X-Y\|_{\mathfrak{B}_\gamma^{0}(H)}\\
&<\|X-Y\|_{\mathfrak{B}_\gamma^{0}(H)}\\
&(\mbox{because } \|V\|_{\mathfrak{B}_\gamma^{0}(H)}<1).
\end{align*}

Therefore,  $\mathfrak{E}$ is a contraction mapping on $\mathfrak{B}_\gamma^{0}(H)$. Now, from  the Banach contraction principle (Theorem \ref{BCP}), we deduce that $\mathfrak{E}$ has a fixed point, that is, there exists an operator $S_*\in \mathfrak{B}_\gamma^{0}(H)$ such that $\mathfrak{E}S_*=S_*$. From Theorem \ref{Q isometry}, we draw the fact that $S_*\in \mathfrak{B}_\gamma^{2}(H)$ as an inverse image by $\mathfrak{Q}_{\gamma,1}$  of an element of $\mathfrak{B}_\gamma^{0}(H)$. 

Moreover,
\begin{align*}
\|\mathfrak{Q}_{\gamma,1}S_*\|_{\mathfrak{B}_\gamma^{0}(H)}&=\|-VS_*+T\|_{\mathfrak{B}_\gamma^{0}(H)}\\
&\leq \|V\|_{\mathfrak{B}_\gamma^{0}(H)}\|S_*\|_{\mathfrak{B}_\gamma^{0}(H)}+ \|T\|_{\mathfrak{B}_\gamma^{0}(H)}.
\end{align*} 
However, we know that $\|\mathfrak{Q}_{\gamma,1}S_*\|_{\mathfrak{B}_\gamma^{0}(H)}=\|S_*\|_{\mathfrak{B}_\gamma^{2}(H)}$. Also $\|S_*\|_{\mathfrak{B}_\gamma^{0}(H)}\leq \|S_*\|_{\mathfrak{B}_\gamma^{2}(H)}$ (refer to Theorem \ref{continuous embedding} i)).  By cross-referencing all this information, we come to the fact that :
\begin{align*}
\|S_*\|_{\mathfrak{B}_\gamma^{2}(H)}&\leq \|V\|_{\mathfrak{B}_\gamma^{0}(H)}\|S_*\|_{\mathfrak{B}_\gamma^{2}(H)}+ \|T\|_{\mathfrak{B}_\gamma^{0}(H)}.
\end{align*}
This  implies 
$$\|S_*\|_{\mathfrak{B}_\gamma^{2}(H)}\leq (1-\|V\|_{\mathfrak{B}_\gamma^{0}(H)})^{-1}\|T\|_{\mathfrak{B}_\gamma^{0}(H)}.$$
\end{proof}

\section{Conclusion}
In this paper, spectral Barron spaces are defined and  their properties are studied. A Schr\"odinger-type in this framework is investigated. 
A lot of work can be done to integrate spectral Barron spaces as objects of study in abstract harmonic analysis. For example, we can study them on abelian groups, compact groups, Lie groups of various kind, etc. It would also be interesting  to study  duality among spectral Barron spaces within the framework of abstract groups.

\end{document}